\theoremstyle{plain}
\newtheorem{Thm}{Theorem}
\newtheorem{Cor}[Thm]{Corollary}
\newtheorem{Lem}[Thm]{Lemma}
\begin{document}
%begin Topmatter
%begin Topmatter
\title[Symmetry Results for Monge-Ampere Systems on a Bounded Domain]
{Symmetry Results for classical solutions of Monge-Ampere systems on
a bounded planar domain}

\author{Li Ma, Baiyu Liu }

\address{LM: Department of mathematical sciences \\
Tsinghua university \\
Beijing 100084 \\
China} \email{lma@math.tsinghua.edu.cn}

\address{BL: Department of mathematical sciences \\
Tsinghua university \\
Beijing 100084 \\
China} \email{liuby05@mails.tsinghua.edu.cn}

\maketitle

\begin{abstract}
In this paper, by the method of moving planes, we establish the
monotonicity and symmetry properties of convex solutions for
Monge-Ampere systems on bounded smooth planar domains.

\emph{Keyword: Method of Moving plane, Monge-Ampere system}

{\em Mathematics Subject Classification: 35J60, 53C21, 58J05}
\end{abstract}

%end topmatter

\section{Introduction}
Motivated by two famous uniqueness theorems (namely, the Cohn-Vossen
theorem and the Minkowski theorem for convex surfaces in 3-space)
and the interesting work of C.Li \cite{Cli1} on symmetric results
for Monge-Ampere equation, we investigate the monotonicity and
symmetry results of convex solutions of the Monge-Ampere system in a
bounded smooth planar domain. Similar problem in the whole plane is
considered in \cite{MaL}. In the case of a single equation,
questions of symmetry kind have been intensively studied in the
literature, see for example B. Gidas, W. M. Ni and L. Nirenberg
\cite{Gid} \cite{gnn} , W. X. Chen and C. Li\cite{ChenL} \cite{CL},
C. Li \cite{Cli1}, Y. Li and W.M. Ni \cite{LiN}, H. Berestycki and
L. Nirenberg \cite{BN}. Interesting Dirichlet problem for single
Monge-Ampere equation has been discussed by N.S.Trudinger and
J.Urbas in \cite{TU}.

The system we consider is the following:
\begin{eqnarray}
  \label{eq:sys}
  \left\{
\begin{array}{l@{\quad \quad}l}
det(D^2u)+g(u,v,\nabla u)=0, & in \ \Omega,\\
det(D^2v)+f(u,v,\nabla v)=0, & in \ \Omega,\\
(D^2u)>0,\ (D^{2}v)>0, & in \ \Omega,
\end{array}\right.
\end{eqnarray}
where $\Omega\subset \mathbb{R}^2$ is a bounded smooth domain and is convex in the $x_1$ direction.
Roughly speaking, we prove that if the equations and the boundary conditions are monotone (symmetric) in a direction then the solutions are also monotone (symmetric) in that direction.

Without loss of generality, we assume $0\in \Omega$ and
$$if\ (x_1, x_2)\in \Omega\ (x_1<0),\quad then\  (x_1', x_2)\in \Omega\ for\ x_1'\in (x_1, -x_1).$$
Here is our main result.
\begin{Thm}
\label{thm:main}
Let $(u, v)$ be a classical solution of (\ref{eq:sys}) with boundary value condition:
\begin{eqnarray}
\label{con:bd}
if & & (x_1, x_2)\in \partial \Omega,\ (x_1', x_2)\in \Omega\ and\ x_1<x_1',\\
\nonumber & & then\qquad u(x_1,x_2)>u(x_1',x_2), \quad v(x_1,x_2)>v(x_1',x_2);\\
\nonumber if & &  (x_1, x_2),\ (x_1', x_2)\in \partial \Omega\ and\ x_1<x_1',\\
\nonumber & & then \qquad u(x_1,x_2)\geq u(x_1',x_2),\quad  v(x_1,x_2)\geq v(x_1',x_2).
\end{eqnarray}
Suppose $f$ and $g$ satisfy:
$f, g\in C^1(\mathbb{R}^4)$;
\begin{eqnarray}\label{con:symfg}&   &
\begin{array}{l@{\quad \quad}l}
g(u,v,p_1,p_2)\geq g(u,v,-p_1,p_2), & \quad \forall u,v,p_2 \in \mathbb{R}, \forall  p_1<0,\\
f(u,v,p_1,p_2)\geq f(u,v,-p_1,p_2), & \quad \forall u,v,p_2 \in \mathbb{R}, \forall  p_1<0;
\end{array}\\
\label{con:monfg}&   &
\frac{\partial g}{\partial v}(u, v, p_1, p_2)>0,\quad \frac{\partial f}{\partial u}(u, v, p_1, p_2)>0 \quad \forall u,v,p_1,p_2\in \mathbb{R}.
\end{eqnarray}
Then for all $(x_1, x_2)\in \Omega$, $x_1<0$, we have:
\begin{equation*}
u(x_1, x_2)\geq u(-x_1, x_2),\ u_1(x_1, x_2)<0;\ \
v(x_1, x_2)\geq v(-x_1, x_2),\ v_1(x_1, x_2)<0.
\end{equation*}
\end{Thm}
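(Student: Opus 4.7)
The plan is to run the method of moving planes for the coupled system $(u,v)$. For $\lambda\in(a,0]$, where $a=\inf\{x_1:x\in\Omega\}$, set $T_\lambda=\{x_1=\lambda\}$, $\Sigma_\lambda=\{x\in\Omega:x_1<\lambda\}$, $x^\lambda=(2\lambda-x_1,x_2)$, $u_\lambda(x)=u(x^\lambda)$, $v_\lambda(x)=v(x^\lambda)$, $w_\lambda=u_\lambda-u$, $z_\lambda=v_\lambda-v$. The $x_1$-convexity assumption on $\Omega$ (strengthened by the paragraph preceding the theorem) ensures $x^\lambda\in\bar\Omega$ whenever $x\in\bar\Sigma_\lambda$. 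The target is to show $w_\lambda\le 0$ and $z_\lambda\le 0$ on $\Sigma_\lambda$ for every such $\lambda$; specializing to $\lambda=0$ yields the symmetry inequalities, and the strict monotonicity $u_1,v_1<0$ on $\{x_1<0\}$ will follow from Hopf's lemma.

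The first step is to linearize each Monge-Amp\`ere operator by the cofactor trick. Since $D^2u_\lambda(x)=R\,D^2u(x^\lambda)\,R$ with $R=\operatorname{diag}(-1,1)$ is positive definite along with $D^2u$, the symmetric matrix
\[
A^{ij}(x)=\int_0^1\operatorname{cof}_{ij}\bigl(tD^2u_\lambda(x)+(1-t)D^2u(x)\bigr)\,dt
\]
is uniformly positive definite and bounded on $\bar\Sigma_\lambda$, and $A^{ij}(w_\lambda)_{ij}=\det D^2u_\lambda-\det D^2u$. Subtracting the equation for $u$ at $x^\lambda$ from the equation at $x$ gives
\[
A^{ij}(w_\lambda)_{ij}=g(u,v,u_1,u_2)-g\bigl(u_\lambda,v_\lambda,u_1(x^\lambda),u_2(x^\lambda)\bigr),
\]
and analogously for $z_\lambda$ with $f$. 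Insert $g(u,v,-u_1,u_2)$ as an intermediate value: on the part of $\Sigma_\lambda$ where $u_1\le 0$, the bracket $g(u,v,u_1,u_2)-g(u,v,-u_1,u_2)$ is non-negative by (\ref{con:symfg}); the remaining difference is expanded by the mean value theorem using the crucial identities $(-u_1)-u_1(x^\lambda)=-(u_1+u_1(x^\lambda))=(w_\lambda)_1$ and $u_2-u_2(x^\lambda)=-(w_\lambda)_2$, producing $-\bar g_uw_\lambda-\bar g_vz_\lambda+\bar g_{p_1}(w_\lambda)_1-\bar g_{p_2}(w_\lambda)_2$ with bounded coefficients. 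This yields the cooperative linear elliptic inequality
\[
A^{ij}(w_\lambda)_{ij}-\bar g_{p_1}(w_\lambda)_1+\bar g_{p_2}(w_\lambda)_2+\bar g_u\,w_\lambda+\bar g_v\,z_\lambda\ge 0,
\]
together with its analogue for $z_\lambda$ (valid where $v_1\le 0$); the coupling coefficients $\bar g_v,\bar f_u$ are strictly positive by (\ref{con:monfg}).

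Now run the moving planes. On $T_\lambda$ the reflection is the identity so $w_\lambda=z_\lambda=0$, and on $\partial\Omega\cap\partial\Sigma_\lambda$ the boundary hypothesis (\ref{con:bd}) gives $w_\lambda,z_\lambda\le 0$. For $\lambda$ just above $a$, the hypothesis (\ref{con:bd}) together with strict convexity (which rules out $u_1=0$ on the left boundary) forces $u_1,v_1<0$ in the narrow cap $\Sigma_\lambda$, so the cooperative inequalities hold there; a narrow-domain maximum principle for cooperative elliptic systems (via the ABP estimate in small-volume form) delivers $w_\lambda,z_\lambda\le 0$ on $\Sigma_\lambda$, and Hopf's lemma on $T_\lambda\cap\Omega$ gives $(w_\lambda)_1>0$, i.e.\ $u_1<0$, because $(w_\lambda)_1|_{T_\lambda}=-2u_1$. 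Let $\lambda^*$ be the supremum of those $\lambda\le 0$ for which $w_\mu,z_\mu\le 0$ on $\Sigma_\mu$ and $u_1,v_1<0$ on $\{x_1\le\mu\}\cap\Omega$ for every $\mu\in(a,\lambda]$. If $\lambda^*<0$, continuity preserves $w_{\lambda^*},z_{\lambda^*}\le 0$; the strict negativity of $w_{\lambda^*}$ on a non-empty subset of $\partial\Omega\cap\partial\Sigma_{\lambda^*}$ (forced by the geometry of $\Omega$ and (\ref{con:bd})) rules out the identically-zero case, so the strong maximum principle for cooperative systems forces $w_{\lambda^*},z_{\lambda^*}<0$ in $\Sigma_{\lambda^*}$, and Hopf on $T_{\lambda^*}$ propagates the strict sign of $u_1,v_1$. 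Splitting $\Sigma_{\lambda^*+\epsilon}$ into a compact interior part (where continuity plus $w_{\lambda^*}<0$ suffices) and a thin sliver near $T_{\lambda^*+\epsilon}$ and $\partial\Omega$ (handled by the narrow-domain principle) yields $w_{\lambda^*+\epsilon},z_{\lambda^*+\epsilon}\le 0$ for small $\epsilon>0$, contradicting the definition of $\lambda^*$.

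The main obstacle is the first-order term produced by the reflected gradient: without (\ref{con:symfg}), the mean-value expansion of the difference of $g$'s leaves a term proportional to $u_1(x)+u_1(x^\lambda)$, which is \emph{not} a component of $\nabla w_\lambda$ and destroys the elliptic comparison. Hypothesis (\ref{con:symfg}) is engineered precisely to absorb this into a manifestly non-negative piece, but the absorption is valid only where $u_1\le 0$ (respectively $v_1\le 0$); consequently the induction on $\lambda$ must carry the two assertions $w_\lambda\le 0$ and $u_1\le 0$ simultaneously, and this coupled propagation is the most delicate bookkeeping point. A secondary but essential point is the switch from a scalar maximum principle to one for \emph{cooperative systems}, for which the strict positivity $g_v,f_u>0$ from (\ref{con:monfg}) is exactly what is needed.
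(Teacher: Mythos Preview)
Your approach is essentially the paper's---moving planes for the cooperative pair $(w_\lambda,z_\lambda)$---with one notable technical difference in how you start the process. The paper does not argue that $u_1<0$ in the initial narrow cap; instead it follows C.~Li's device of treating $U=w_\lambda$ as a function of $(x,\lambda)$ on the three-dimensional set $Q_{-a+\epsilon_0}$ and maximizing the renormalized $\bar U=U/\psi$ over \emph{both} $x$ and $\lambda$. At the maximizer $(y_1,\lambda_1)$ the condition $\partial_\lambda\bar U\ge 0$ forces $u_1(y_1^{\lambda_1})\ge 0$, i.e.\ $u_{\lambda_1,1}(y_1)\le 0$, and it is \emph{this} sign (not $u_1(y_1)\le 0$) that the paper feeds into hypothesis~(\ref{con:symfg}); the contradiction then comes from the explicit barrier $\psi$ of Lemma~\ref{lem:psi} rather than an abstract narrow-domain/ABP principle. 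Your route---establishing $u_1<0$ in the cap from the boundary data and strict convexity, then invoking a narrow-domain maximum principle for cooperative systems---is valid provided $u\in C^1(\bar\Omega)$ so that the boundary argument makes sense, and is closer in spirit to the Busca--Sirakov treatment of semilinear systems. In Step~2 the two proofs reconverge: both use that $u_1\le 0$ on $\Sigma_{\bar\mu}$ once the plane has moved that far, though the paper reaches its contradiction via a sequence-plus-Hopf argument rather than your compact/thin splitting. One small slip in your commentary: the problematic term in the direct expansion is $u_1(x)-u_1(x^\lambda)$, not $u_1(x)+u_1(x^\lambda)$ (the latter \emph{is} $-(w_\lambda)_1$); your insertion of $g(u,v,-u_1,u_2)$ correctly converts the difference to $(w_\lambda)_1$, so the derivation itself is fine.
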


Type example for the boundary condition in Theorem \ref{thm:main} is
when $u<0$ and $v<0$ in $\Omega$ with the Dirichlet boundary
conditions $u=0=v$ on $\partial \Omega$. Theorem \ref{thm:main} will
be proved by the use of the maximum principle and the method of
moving planes in the similar spirit as in \cite{Bu00}.

The above monotonicity result yields the following symmetry result.
\begin{Cor}\label{thm:col}
In addition to the assumptions of Theorem \ref{thm:main}, we assume
that $\Omega$ is symmetric in the $x_1$ direction, $g$ and $f$ are
symmetric in $x_1$ and $p_1$, i.e.
\begin{eqnarray*}
\begin{array}{l@{\quad \quad}l}
g(u,v,p_1,p_2)= g(u,v,-p_1,p_2), & \quad \forall u,v,p_2 \in \mathbb{R}, \ \forall  p_1<0;\\
f(u,v,p_1,p_2)= f(u,v,-p_1,p_2), & \quad \forall u,v,p_2 \in \mathbb{R}, \ \forall  p_1<0.
\end{array}
\end{eqnarray*}
Assume further that the boundary values of $u, v$ are symmetric in
the $x_1$ direction, i.e.
\begin{eqnarray*}
&if&\ (x_1, x_2)\in \partial \Omega,\ (x_1', x_2)\in \Omega,\\
& & then\ u(x_1,x_2)>u(x_1',x_2),\ v(x_1,x_2)>v(x_1',x_2);\\
&if&\ (x_1, x_2),\ (x_1', x_2)\in \partial \Omega,
\\ &&
 \ then\ u(x_1,x_2)=u(x_1',x_2),\  v(x_1,x_2)=v(x_1',x_2).
\end{eqnarray*}
Then $u, v$ are symmetric in the $x_1$ direction. That is
$$
u(-x_1, x_2)=u(x_1, x_2), \quad v(-x_1, x_2)=v(x_1, x_2), \quad \forall (x_1, x_2)\in \Omega.
$$
\end{Cor}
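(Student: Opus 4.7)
My plan is to apply Theorem \ref{thm:main} twice---once to $(u,v)$ and once to the reflected pair $\tilde u(x_1,x_2):=u(-x_1,x_2)$, $\tilde v(x_1,x_2):=v(-x_1,x_2)$---and then combine the two one-sided inequalities, which point in opposite directions, to force equality. The evenness of $g$ and $f$ in $p_1$ trivially implies the inequality \eqref{con:symfg}, and the symmetric boundary-value hypotheses of the Corollary (strict decrease from boundary to interior, equality between boundary points with the same $x_2$) imply the weaker asymmetric boundary condition \eqref{con:bd} of the Theorem. Applying Theorem \ref{thm:main} directly to $(u,v)$ thus yields, for every $(x_1,x_2)\in\Omega$ with $x_1<0$,
\begin{equation*}
u(x_1,x_2)\geq u(-x_1,x_2),\qquad v(x_1,x_2)\geq v(-x_1,x_2).
\end{equation*}

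For the second application I first verify that $(\tilde u,\tilde v)$ again solves the system \eqref{eq:sys} on $\Omega$. Since $\Omega$ is $x_1$-symmetric the pair is well defined on all of $\Omega$. A direct chain-rule computation gives $D^2\tilde u(x_1,x_2)=M(D^2u)(-x_1,x_2)M$ with $M=\mathrm{diag}(-1,1)$, so $D^2\tilde u>0$ (congruence preserves positive-definiteness) and $\det(D^2\tilde u)(x_1,x_2)=\det(D^2u)(-x_1,x_2)$. The gradient $\nabla\tilde u$ has its first component sign-flipped, but the assumed evenness of $g,f$ in $p_1$ exactly absorbs that flip, so $(\tilde u,\tilde v)$ satisfies the same equations. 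The $x_1$-symmetry of $\Omega$ together with the symmetric boundary values of $u,v$ yields for $(\tilde u,\tilde v)$ the monotonicity-type boundary hypothesis of Theorem \ref{thm:main}. That theorem therefore gives, for $x_1<0$, $\tilde u(x_1,x_2)\geq\tilde u(-x_1,x_2)$ and $\tilde v(x_1,x_2)\geq\tilde v(-x_1,x_2)$, i.e.
\begin{equation*}
u(-x_1,x_2)\geq u(x_1,x_2),\qquad v(-x_1,x_2)\geq v(x_1,x_2).
\end{equation*}

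Combining the two chains of inequalities forces equality on $\{x_1<0\}\cap\Omega$; the case $x_1>0$ follows at once by relabelling $x_1\mapsto -x_1$, and $x_1=0$ is immediate. Both applications are direct invocations of the already-proved Theorem \ref{thm:main}, so the only point requiring genuine attention is the invariance of the system \eqref{eq:sys} and of the boundary conditions under the reflection $x_1\mapsto -x_1$. This invariance is routine once the congruence and evenness observations above are in place, and I do not anticipate any serious obstacle.
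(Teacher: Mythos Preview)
Your proposal is correct and follows essentially the same route as the paper: apply Theorem~\ref{thm:main} to $(u,v)$, then to the reflected pair $(\tilde u,\tilde v)$, and combine the two opposite one-sided inequalities to obtain symmetry. Your verification that $(\tilde u,\tilde v)$ again solves \eqref{eq:sys} (via the congruence $D^2\tilde u=M(D^2u)M$ and the evenness of $f,g$ in $p_1$) is in fact more explicit than the paper's one-line remark to this effect.
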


The plan of the paper is below. In section \ref{sect2}, we make a
Lemma for the method of moving planes. We use the method of moving
planes to prove Theorem \ref{thm:main} in section \ref{sect3}.
Corollary \ref{thm:col} is proved in section \ref{sect4}.

\section{Monge-Ampere System in bounded domains in $\mathbf{R}^{2}$}
\label{sect2}

Denote by
$$
a:=-\inf\{\ x_1\ |\ (x_1, x_2)\in \Omega\}>0.
$$
In what follows, we shall use the method of moving planes.
To proceed, we start by considering lines parallel to $x_1=0$, coming
from $-a$. For each $-a<\lambda\leq 0$, we define
$$
\Sigma(\lambda) := \{x\in\Omega\ | \ x_1 <\lambda\},\quad
T_\lambda := \{x\in \Omega \ |\ x_1=\lambda\}.
$$
For any point $x=(x_1, x_2) \in \Sigma(\lambda) $, let
$x^\lambda=(2\lambda-x^1, x_2)$ be the reflected point with respect
to the line $T_\lambda$. We define the reflected functions by
$$
u_\lambda(x):= u(x^\lambda), \quad v_\lambda(x):= v(x^\lambda),\quad x\in \Sigma(\lambda),
$$
and introduce the functions
$$
U(x, \lambda):= u_\lambda(x)-u(x), \quad V(x, \lambda):=
v_\lambda(x)-v(x), \quad x\in \Sigma(\lambda).
$$
They also can be regarded as functions of three variables $(x_1, x_2, \lambda)$ defined on
$$
Q_\mu:=\{\ (x_1, x_2, \lambda)\ |\ (x_1, x_2)\in \Sigma(\lambda), \ x_1<\lambda<\mu \}.
$$
By the definition of $u_{\lambda}$,
$$
det(D^2u_{\lambda})(x)=det(D^2u)(x^{\lambda}).
$$
By using the integral form of the mean value theorem, we obtain:
\begin{equation}\label{eq:det}
det(D^2 u_\lambda)(x)-det(D^2 u)(x)=a_{ij}(x)U_{ij}(x),
\end{equation}
where
\begin{equation}
\label{eq:aij}
(a_{ij})=\frac{1}{2}(det(D^2 u_\lambda)(D^2
u_\lambda)^{-1}+det(D^2 u)(D^2 u)^{-1}).
\end{equation}
Noticing that the solution $u, v$ are convex everywhere, we have
\begin{equation}
\label{eq:posaij} (a_{ij}(x))>0, \quad x \in \bar{\Omega}.
\end{equation}

We now compute the equations which $U(x, \lambda)$ and $V(x,\lambda)$ satisfy under an extra assumption: $\frac{\partial u_{\lambda}}{\partial x_1}(x)\leq 0$ or $\frac{\partial v_{\lambda}}{\partial x_1}(x)\leq 0$. By using (\ref{con:symfg}),
\begin{equation*}
  det(D^2u_{\lambda})(x) =-g(u, v, \nabla u)(x^{\lambda})=-g(u_{\lambda}, v_{\lambda}, -u_{\lambda,1},
  u_{\lambda,2}) \geq -g(u_{\lambda}, v_{\lambda}, \nabla u_{\lambda}).
\end{equation*}
Combining the above inequality with (\ref{eq:det}), we get
\begin{equation}\label{eq:hhh}
a_{ij}U_{ij}(x,\lambda)+g(u_{\lambda}(x), v_{\lambda}(x), \nabla u_{\lambda}(x))-g(u(x), v(x), \nabla u(x))\geq 0.
\end{equation}
Similar computations derive that: when $\frac{\partial v_{\lambda}}{\partial x_1}(x)\leq 0$,
\begin{equation}\label{eq:iii}
b_{ij}V_{ij}(x,\lambda)+f(u_{\lambda}(x), v_{\lambda}(x), \nabla v_{\lambda}(x))-f(u(x), v(x), \nabla v(x))\geq 0,
\end{equation}
where
\begin{equation}
\label{eq:bij}
(b_{ij})=\frac{1}{2}(det(D^2 v_\lambda)(D^2
v_\lambda)^{-1}+det(D^2 v)(D^2 v)^{-1}).
\end{equation}
Moreover, $b_{ij}$ is also an uniformly elliptic coefficient matrix on $\bar{\Omega}$.

By using the Taylor's expansion, we can rewrite (\ref{eq:hhh}) and (\ref{eq:iii}): at the point which satisfies $\frac{\partial u_{\lambda}}{\partial x_1}(x)\leq 0$,
\begin{eqnarray}
  \label{eq:bu}
  a_{ij}U_{ij}&+&\frac{\partial g}{\partial p_1}(u_{\lambda}, v_{\lambda}, \theta_1(x,\lambda) ,
  u_{\lambda,2})U_{1}
  +\frac{\partial g}{\partial p_2}(u_{\lambda}, v_{\lambda},
  u_{1}, \theta_{2}(x,\lambda))U_{2}\\
\nonumber  &+&\frac{\partial g}{\partial u}(\xi_1(x,\lambda), v_\lambda, \nabla u )U+\frac{\partial
  g}{\partial v}(u, \eta_{1}(x, \lambda), \nabla u)V\geq 0;
\end{eqnarray}
at the point which satisfies $\frac{\partial v_{\lambda}}{\partial x_1}(x)\leq 0$,
\begin{eqnarray}
  \label{eq:bv}
  b_{ij}V_{ij}& + &\frac{\partial f}{\partial p_1}(u_{\lambda}, v_{\lambda}, \tau_1(x,\lambda) ,
  v_{\lambda,2})V_{1}+\frac{\partial f}{\partial p_2}(u_{\lambda}, v_{\lambda},
  v_{1}, \tau_{2}(x,\lambda))V_{2}\\
\nonumber  & + & \frac{\partial f}{\partial u}(\xi_2(x,\lambda),
v_\lambda, \nabla v )U+\frac{\partial
  f}{\partial v}(u, \eta_{2}(x, \lambda), \nabla v)V\geq 0,
\end{eqnarray}
where for $i=1,2,$
$$
\xi_{i}(x, \lambda)\in (\overline{u(x),
u_{\lambda}(x)}, \quad
\eta_{i}(x, \lambda) \in  (\overline {v(x), v_{\lambda}(x)}),
$$
$$\theta_{i}(x, \lambda)\in (\overline {u_{i}(x),
u_{\lambda,i}(x)}),\quad
\tau_i(x, \lambda)\in (\overline{v_{i}(x), v_{\lambda,i}(x)}).
$$
Here we have used the notation $(\overline{A,B})$ to denote the open
interval in the line from $A$ to $B$.\\
For a positive function $\psi$ defined on $\Omega$, we introduce functions:
$$
\bar{U}(x, \lambda):=\frac{U(x, \lambda)}{\psi},\quad \bar{V}(x, \lambda):=\frac{V(x, \lambda)}{\psi}.
$$
Direct computation shows that:
at the point which satisfies $\frac{\partial u_{\lambda}}{\partial x_1}(x)\leq 0$,
\begin{eqnarray}
  \label{eq:bbu}
  a_{ij}\bar{U}_{ij}&+&(2a_{1j}\psi_j\frac{1}{\psi}+\frac{\partial g}{\partial p_1}(u_{\lambda}, v_{\lambda},\theta_1(x,\lambda) ,u_{\lambda,2}))\bar{U}_{1}\\
  \nonumber &+&(2a_{2j}\psi_j\frac{1}{\psi}+\frac{\partial g}{\partial p_2}(u_{\lambda}, v_{\lambda}, u_1 ,\theta_2(x,\lambda)))\bar{U}_{2}\\
  \nonumber &+&(a_{ij}\psi_{ij}\frac{1}{\psi}+\frac{\partial g}{\partial p_1}(u_{\lambda}, v_{\lambda},\theta_1(x,\lambda) ,u_{\lambda,2})\psi_1\frac{1}{\psi}\\
  \nonumber & \quad & +\frac{\partial g}{\partial p_2}(u_{\lambda}, v_{\lambda},u_1 ,\theta_2(x,\lambda))\psi_2\frac{1}{\psi}+\frac{\partial g}{\partial u}(\xi_1(x,\lambda), v_\lambda, \nabla u ))\bar{U}\\
  \nonumber & +& \frac{\partial g}{\partial v}(u, \eta_{1}(x, \lambda), \nabla u)\bar{V}\geq 0;
\end{eqnarray}
at the point which satisfies $\frac{\partial v_{\lambda}}{\partial x_1}(x)\leq 0$,
\begin{eqnarray}
  \label{eq:bbv}
  b_{ij}\bar{V}_{ij}& + & (2b_{1j}\psi_j\frac{1}{\psi}+\frac{\partial f}{\partial p_1}(u_{\lambda}, v_{\lambda}, \tau_1(x,\lambda) ,v_{\lambda,2}))\bar{V}_1\\
  \nonumber &+& (2b_{2j}\psi_j\frac{1}{\psi}+\frac{\partial f}{\partial p_2}(u_{\lambda}, v_{\lambda},
  v_{1}, \tau_{2}(x,\lambda))\bar{V}_2\\
  \nonumber &+& (b_{ij}\psi_{ij}\frac{1}{\psi}+\frac{\partial f}{\partial p_1}(u_{\lambda}, v_{\lambda}, \tau_1(x,\lambda) ,v_{\lambda,2})\psi_1\frac{1}{\psi}\\
  \nonumber & \quad & +\frac{\partial f}{\partial p_2}(u_{\lambda}, v_{\lambda},
  v_{1}, \tau_{2}(x,\lambda)\psi_2\frac{1}{\psi}+\frac{\partial
  f}{\partial v}(u, \eta_{2}(x, \lambda), \nabla v))\bar{V}\\
  \nonumber & + & \frac{\partial
  f}{\partial u}(\xi_{2}(x, \lambda), v_\lambda, \nabla v)\bar{U}\geq 0,
\end{eqnarray}
where $\xi_{i}$, $\eta_{i}$, $\theta_{i}$ and  $\tau_i$ are the same as in (\ref{eq:bu}) and (\ref{eq:bv}).

From (\ref{eq:aij}) and (\ref{eq:bij}), we can consider (\ref{eq:bbu}) and (\ref{eq:bbv}) as two uniformly elliptic equations with bounded coefficients. The following result for two elliptic differential operators will be used in the proof of our main theorem.
\begin{Lem}\label{lem:psi}
Let $L_1=A_{ij}(x)D_{ij}+B_i(x)D_i+C(x)$ and $L_2=B_{ij}(x)D_{ij}+E_i(x)D_i+F(x)$ be two uniformly elliptic operators on a domain $\Sigma\subset \Omega \subset \mathbb{R}^2$. That is
$$
A_{ij}\xi_i\xi_j\geq m^2|\xi|^2,\ (m>0),\quad B_{ij}\xi_i\xi_j\geq m^2|\xi|^2, \quad \forall \xi \in \mathbb{R}^2, \forall x\in \Sigma,
$$
$$
|A_{ij}|+|B_i|+|C|+|B_{ij}|+|E_i|+|F|\leq C_0.
$$
Then there exists an $\epsilon_0>0$ depends  on $m, C_0$ only, such that if $\Sigma$ lies in a narrow region in the $x_1$ direction $-a<x_1<-a+\epsilon\leq -a+\epsilon_0$, then the function $\psi(x)=\psi(x_1)=e-e^{\frac{1}{2\epsilon}(x_1+a)}$ which depends on $x_1$ satisfies the following:
\begin{eqnarray*}(i) &   & \forall (x, \lambda)\in \Sigma\times (-a, 0), \quad \psi>1,\quad and \quad \frac{L_1 \psi}{\psi}<-1,\quad
\frac{L_2 \psi}{\psi}<-1;\\
(ii) &   & \forall (y_i, \lambda_i)\in \Sigma\times (-a, -a+\epsilon_0) \ (i=1,2),\\
&   & \frac{L_1 \psi}{\psi}|_{(y_1,\lambda_1)}\frac{L_2 \psi}{\psi}|_{(y_2,\lambda_2)}
-\frac{\partial g}{\partial v}(u, \eta_{1}, \nabla u)|_{(y_1,\lambda_1)}\frac{\partial
  f}{\partial u}(\xi_{2}, v_{\lambda}, \nabla v)|_{(y_2,\lambda_2)}
>0,
\end{eqnarray*}
where $\xi_{i}$ and $\eta_{i}$ are the same as in (\ref{eq:bu}) and (\ref{eq:bv}).
\end{Lem}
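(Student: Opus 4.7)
The plan is to reduce both (i) and (ii) to one narrow--strip computation: since $\psi$ depends only on $x_1$, the term $A_{11}\psi_{11}$ contributes an amount of order $-1/\epsilon^2$ to $L_1\psi$ whereas every other term is of order $1/\epsilon$ or $1$, so by shrinking $\epsilon_0$ we can make $L_i\psi/\psi$ as large in magnitude and negative as we please, and both conclusions then fall out.

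Concretely, I would first compute $\psi_1=-\frac{1}{2\epsilon}e^{(x_1+a)/(2\epsilon)}$, $\psi_{11}=-\frac{1}{4\epsilon^2}e^{(x_1+a)/(2\epsilon)}$, $\psi_2=\psi_{22}=0$. For $-a<x_1<-a+\epsilon$ the exponent sits in $(0,1/2)$, so $e^{(x_1+a)/(2\epsilon)}\in(1,\sqrt{e})$ and hence $\psi\in(e-\sqrt{e},\,e-1)$; since $e-\sqrt{e}>1$, the inequality $\psi>1$ in (i) is immediate. Substituting into $L_1$,
$$L_1\psi = -\Bigl(\frac{A_{11}}{4\epsilon^2}+\frac{B_1}{2\epsilon}\Bigr)e^{(x_1+a)/(2\epsilon)}+C\psi,$$
and using $A_{11}\ge m^2$, $|B_1|,|C|\le C_0$, $e^{(x_1+a)/(2\epsilon)}\ge 1$, $\psi\le e-1$ I obtain
$$L_1\psi \le -\frac{m^2}{4\epsilon^2}+\frac{C_0\sqrt{e}}{2\epsilon}+C_0(e-1).$$
The analogous estimate holds for $L_2\psi$. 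Dividing by $\psi\in(1,e)$ and choosing $\epsilon_0$ small in terms of $m$ and $C_0$ alone produces $L_i\psi/\psi<-K$ for any prescribed constant $K>0$; the case $K=1$ gives (i).

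For (ii), since $(u,v)$ is a classical $C^2$ solution on $\bar\Omega$ and $f,g\in C^1(\mathbb{R}^4)$, the partials $\partial g/\partial v$ and $\partial f/\partial u$ are evaluated on a compact set controlled by $\sup_{\bar\Omega}(|u|+|v|+|\nabla u|+|\nabla v|)$ and are therefore bounded by some $M>0$. Applying the preceding step with the stronger target $K=M+1$ produces two factors each $<-(M+1)$, whose product exceeds $(M+1)^2>M^2\ge (\partial g/\partial v)\,(\partial f/\partial u)$; this is exactly (ii).

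The only genuine obstacle is bookkeeping of constants: one must track that $\epsilon_0$ depends only on $m$, $C_0$, and (implicitly, through $M$) the a priori $C^1$ bound of $(u,v)$, and verify that the arguments of the nonlinear terms indeed range over a compact set on which $\partial g/\partial v$ and $\partial f/\partial u$ remain bounded. The argument itself, once the explicit choice of $\psi$ is made, is a routine narrow--region estimate and requires no maximum principle at this stage.
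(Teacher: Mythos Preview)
Your proposal is correct and follows essentially the same approach as the paper's proof: compute $\psi_1,\psi_{11}$ explicitly, observe that the second-order term $-A_{11}\psi_{11}$ dominates like $-m^2/(4\epsilon^2)$ while all other contributions are $O(1/\epsilon)$ or $O(1)$, conclude that $L_i\psi/\psi\to-\infty$ as $\epsilon\to 0$, and for (ii) simply compare this divergence against the uniform bound on $\partial g/\partial v$ and $\partial f/\partial u$ coming from compactness of $\bar\Omega$ and $C^1$-regularity of $f,g$. Your bookkeeping is in fact slightly more explicit than the paper's (you track the factor $\sqrt{e}$ and name the constant $M$), and you correctly flag that for part (ii) $\epsilon_0$ must also depend on this $M$, a dependence the lemma's statement suppresses but its proof and later use implicitly require.
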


\begin{proof}
It is easy to see that $e-1\geq \psi(x)\geq e-e^{\frac{1}{2}}>1$.
\begin{eqnarray*}
L_1\psi& =& -A_{11}\frac{1}{4\epsilon^2}e^{\frac{1}{2\epsilon}(x_1+a)}-B_1\frac{1}{2\epsilon}e^{\frac{1}{2\epsilon}(x_1+a)}+C(e-e^{\frac{1}{2\epsilon}(x_1+a)})\\
& \leq & -(m^2\frac{1}{4\epsilon^2}-C_0\frac{1}{2\epsilon}-C_0)+C_0e.
\end{eqnarray*}
Therefore, for $\epsilon_0>0$
small, we can make $\frac{L_1 \psi}{\psi}<-1$ and $\frac{L_2 \psi}{\psi}<-1$.

We now prove (ii). Actually, from above we can see that $\frac{L_i\psi}{\psi}\to -\infty$ as $\epsilon\to 0$ $(i=1, 2)$. Moreover, $\frac{\partial g}{\partial v}(u, \eta_{1}, \nabla u)$ and $\frac{\partial
  f}{\partial u}(\xi_{2}, v_{\lambda}, \nabla v)$ are both bounded on $\Omega$ for all $-a<\lambda<0$. Therefore, (ii) holds when $\epsilon_0$ is sufficiently small.
\end{proof}

\section{Proof of Theorem \ref{thm:main}}\label{sect3}

We now use the moving planes method to prove Theorem \ref{thm:main}. It consists of three steps:\\
\textbf{Step 1.} Prove $U(x, \lambda)\leq 0$ and $V(x, \lambda) \leq 0$ in $\Sigma_\lambda$ for $0<\lambda+a$ small.\\
\textbf{Step 2.} Prove $U(x, \lambda)\leq 0$ and $V(x, \lambda) \leq 0$ in $\Sigma_\lambda$ for all $\lambda$ in $(-a, 0)$.\\
\textbf{Step 3.} Prove $u_1(x)<0$ and $v_1(x)<0$, for all $(x_1, x_2)\in \Omega , x_1<0$.

\textbf{Proof of Step 1.} Let $\epsilon_0>0$ be a constant which we will choose later.
Assume for contraction that there exists a $\lambda_1\in (-a, -a+\epsilon_0]$ and a point $y_1\in \Sigma(\lambda_1)$ such that $U(y_1, \lambda_1)>0$ i.e. $\bar{U}(y_1, \lambda_1)>0$. We can choose $(y_1, \lambda_1)$ such  that
$$
\bar{U}(y_1, \lambda_1)=\max_{(x, \lambda)\in \overline{Q_{-a+\epsilon_0}} } \bar{U}(x, \lambda)>0.
$$

We claim that at $(y_1, \lambda_1)$, (\ref{eq:bbu}) holds. To see this, we should show $\frac{\partial u_{\lambda_1}}{\partial x_1}(y_1)\leq 0$. From the definition of $\bar{U}$ and from the boundary condition (\ref{con:bd}), we have
\begin{equation}\label{res:bd}
\bar{U}(x_1, \lambda)=0\  on\  x_1=\lambda\quad and \quad
\bar{U}\leq 0\ on\ x\in \partial \Omega.
\end{equation}
So we get
$$
\frac{\partial \bar{U}}{\partial \lambda}(y_1, \lambda_1)\geq 0, \quad i.e.\quad \frac{\partial U}{\partial \lambda}(y_1, \lambda_1)\geq 0.
$$
Thus,
\begin{equation*}
0 \leq  \frac{\partial U}{\partial \lambda}(y_1, \lambda_1)
=  2\frac{\partial u}{\partial x_1}(y_1^{\lambda_1})
= -2\frac{\partial u_{\lambda_1}}{\partial x_1}(y_1).
\end{equation*}
Therefore, (\ref{eq:bbu}) holds at point $y_1$. Moreover, from (\ref{res:bd}) we can see $y_1$ in the interior of $\Sigma(\lambda_1)$. So there hold the followings:
$$
(D^2_{ij}\bar{U}(y_1,\lambda_1))\leq 0,\quad \nabla \bar{U}(y_1,\lambda_1)=0.
$$
Inequality (\ref{eq:bbu}) turns to be:
\begin{equation}\label{eq:aaa}
\frac{L_1 \psi}{\psi}(y_1)\bar{U}(y_1, \lambda_1) +\frac{\partial g}{\partial v}(u(y_1), \eta_{1}(y_1, \lambda_1), \nabla u(y_1))\bar{V}(y_1, \lambda_1)\geq 0
\end{equation}
where
\begin{eqnarray*}
L_1 & = & a_{ij}(x, \lambda_1)D_{ij}+\frac{\partial g}{\partial p_1}(u_{\lambda_1}, v_{\lambda_1},\theta_1(x,\lambda_1) ,u_{\lambda_1,2})D_1\\
&   &  +\frac{\partial g}{\partial p_2}(u_{\lambda_1}, v_{\lambda_1},u_1 ,\theta_2(x,\lambda_1))D_2
+\frac{\partial g}{\partial u}(\xi_1(x,\lambda_1), v_{\lambda_1}, \nabla u ).
\end{eqnarray*}
From Lemma \ref{lem:psi}, we can choose $\epsilon_0$ sufficiently small, such that $\frac{L_1 \psi}{\psi}<-1$.
Since $\bar{U}(y_1, \lambda_1)>0$, it follows from (\ref{eq:aaa}) that
$$
\frac{\partial g}{\partial v}(u(y_1), \eta_{1}(y_1, \lambda_1), \nabla u(y_1))\bar{V}(y_1, \lambda_1)> 0.
$$
Combining this with assumption (\ref{con:monfg}), we have $\bar{V}(y_1, \lambda_1)>0$. Therefore, we may take
$$
\bar{V}(y_2, \lambda_2)=\max_{(x, \lambda)\in \overline{Q_{-a+\epsilon_0}}} \bar{V}(x, \lambda)>0.
$$
Now using (\ref{eq:bbv}), we can repeat the above argument and show that $\bar{U}(y_2, \lambda_2)>0$ and
\begin{equation}\label{eq:bbb}
\frac{L_2 \psi}{\psi}(y_2)\bar{V}(y_2, \lambda_2)+
\frac{\partial
f}{\partial u}(\xi_{2}(y_2, \lambda_2), v_{\lambda_2}(y_2), \nabla v(y_2))\bar{U}(y_2, \lambda_2)\geq 0,
\end{equation}
where
\begin{eqnarray*}
L_2 & = & b_{ij}(x, \lambda_2)D_{ij}+\frac{\partial f}{\partial p_1}(u_{\lambda_2}, v_{\lambda_2}, \tau_1(x,\lambda_2) ,v_{\lambda_2,2})D_1\\
& \quad & +\frac{\partial f}{\partial p_2}(u_{\lambda_2}, v_{\lambda_2},
v_{1}, \tau_{2}(x,\lambda_2)D_2+\frac{\partial
f}{\partial v}(u, \eta_{2}(x, \lambda_2), \nabla v).
\end{eqnarray*}
Let us put
$$
\alpha(y_1, \lambda_1)=\frac{L_1 \psi}{\psi}(y_1)<0, \quad \beta(y_1, \lambda_1)=\frac{\partial g}{\partial v}(u(y_1), \eta_{1}(y_1, \lambda_1), \nabla u(y_1))>0,
$$
$$
\gamma(y_2, \lambda_2)=\frac{L_2 \psi}{\psi}(y_2)\bar{V}(y_2, \lambda_2)<0, \quad \delta(y_2, \lambda_2)=\frac{\partial
f}{\partial u}(\xi_{2}(y_2, \lambda_2), v_{\lambda_2}(y_2), \nabla v(y_2))>0.
$$
From (\ref{eq:aaa}) and (\ref{eq:bbb}), we have
\begin{eqnarray*}
\bar{U}(y_1, \lambda_1) & \leq & -\frac{\beta(y_1, \lambda_1)}{\alpha(y_1, \lambda_1)}\bar{V}(y_1, \lambda_1)\\
& \leq  & -\frac{\beta(y_1, \lambda_1)}{\alpha(y_1, \lambda_1)}\bar{V}(y_2, \lambda_2)\\
& \leq & \frac{\beta(y_1, \lambda_1)\delta(y_2, \lambda_2)}{\alpha(y_1, \lambda_1)\gamma(y_2, \lambda_2)}\bar{U}(y_2, \lambda_2)\\
& \leq & \frac{\beta(y_1, \lambda_1)\delta(y_2, \lambda_2)}{\alpha(y_1, \lambda_1)\gamma(y_2, \lambda_2)}\bar{U}(y_1, \lambda_1).
\end{eqnarray*}
Then we obtain
\begin{equation}\label{eq:ccc}
\alpha(y_1, \lambda_1)\gamma(y_2, \lambda_2)-\beta(y_1, \lambda_1)\delta(y_2, \lambda_2)\leq 0.
\end{equation}
But from Lemma \ref{lem:psi} (ii), we know that if we choose $\epsilon_0$ sufficiently small, (\ref{eq:ccc}) does not hold.
\textbf{(Step 1. is completed.)}
\\
Let us put
$$
\bar{\mu}=\sup\{\ -a<\mu\leq 0\ |\ U(x,\lambda)\leq 0, V(x, \lambda)\leq 0, \forall x\in \Sigma(\lambda), \forall \lambda\leq \mu\}.
$$
We claim that $\bar{\mu}=0$.

\textbf{Proof of Step 2.}
Suppose the contrary, that $\bar{\mu}<0$. The maximality of $\bar{\mu}$ tells us that there exist sequences $\{\lambda^k\}$ and $x^k\in \Sigma(\lambda^k)$ satisfy:
$$
\bar{\mu}<\lambda^k<0,\quad  \lim_{k\to \infty}\lambda^k=\bar{\mu};\quad
U(x^k, \lambda^k)>0 \ or\ V(x^k, \lambda^k)>0.
$$
Without loss of generality, we may assume
$$
U(x^k, \lambda^k)=\max_{x\in \overline{\Sigma(\lambda^k)}}U(x, \lambda^k)>0; \quad \lim_{k\to \infty}x^k=\bar{x}\in \overline{\Sigma(\bar{\mu})}.
$$
On one hand the boundary condition (\ref{con:bd}) tells us that $U(x, \lambda^k)\leq 0$, $x\in \partial \Sigma(\lambda^k)\setminus T_{\lambda^k}$. On the other hand when $x\in T_{\lambda^k}$, $U(x, \lambda^k)=0$. Therefore, the point $x^k$ must be an interior point of $\Sigma(\lambda^k)$, which implies:
\begin{equation*}
(D^2U(x^k, \lambda^k))\leq 0,\quad \nabla U(x^k, \lambda^k)=0,\quad U(x^k, \lambda^k)>0.
\end{equation*}
Taking limit $k\to \infty$ and using the fact $
U(\bar{x}, \bar{\mu})\leq 0
$, we obtain

\begin{equation}
\label{eq:ddd}
(D^2U(\bar{x}, \bar{\mu}))\leq 0,\quad \nabla U(\bar{x}, \bar{\mu})=0,\quad U(\bar{x}, \bar{\mu})= 0.
\end{equation}

We now derive the differential equation $U(x, \bar{\mu})$ satisfies.
From the definition of $\bar{\mu}$, we can check that
$$
u_1(x)\leq 0, \quad \forall x\in \Sigma(\bar{\mu}).
$$
Using (\ref{con:symfg}), we have
$$
g(u(x), v(x), u_1(x), u_2(x))\geq g(u(x), v(x), -u_1(x), u_2(x))\quad \forall x\in \Sigma(\bar{\mu}).
$$
Thus,
\begin{eqnarray*}
a_{ij}(x)U_{ij} (x, \bar{\mu})& = & det(D^2 u_{\bar{\mu}})(x)-det(D^2 u)(x)\\
& = & det(D^2 u)(x^{\bar{\mu}})-det(D^2 u)(x)\\
& = & -g(u(x^{\bar{\mu}}), v(x^{\bar{\mu}}), \nabla u(x^{\bar{\mu}}))+g(u(x), v(x), \nabla u (x))\\
& \geq & -g(u(x^{\bar{\mu}}), v(x^{\bar{\mu}}), \nabla u(x^{\bar{\mu}}))+g(u(x), v(x), -u_1 (x), u_2(x))\\
& = & -g(u_{\bar{\mu}}(x), v_{\bar{\mu}}(x), -u_{\bar{\mu},1}(x), u_{\bar{\mu},2}(x))\\
&   & +g(u(x), v(x), -u_1 (x), u_2(x)).
\end{eqnarray*}
where $(a_{ij})=\frac{1}{2}(det(D^2 u_{\bar{\mu}})(D^2
u_{\bar{\mu}})^{-1}+det(D^2 u)(D^2 u)^{-1})$.
Using the Taylor's expansion, $U(x, \bar{\mu})$ satisfies: for all $x\in \Sigma(\bar{\mu})$
\begin{eqnarray*}
  %\label{eq:2bu}\\
  \nonumber
  a_{ij}(x)U_{ij}(x,\bar{\mu})&-&\frac{\partial g}{\partial p_1}(u_{\bar{\mu}}, v_{\bar{\mu}}, -\theta_1(x,\bar{\mu}) ,
  u_{\bar{\mu},2})U_{1}
  +\frac{\partial g}{\partial p_2}(u_{\bar{\mu}}, v_{\bar{\mu}},
  -u_{1}, \theta_{2}(x,\bar{\mu}))U_{2}\\
\nonumber  &+&\frac{\partial g}{\partial u}(\xi(x,\bar{\mu}), v_{\bar{\mu}}, -u_1, u_2 )U+\frac{\partial
  g}{\partial v}(u, \eta_{1}(x, \bar{\mu}), -u_1, u_2)V(x,\bar{\mu})\geq 0;
\end{eqnarray*}
Since $V(x,\bar{\mu})\leq 0$, $\forall x\in \Sigma(\bar{\mu})$ and $\frac{\partial
  g}{\partial v}>0$, we can rewrite the above inequality as
\begin{eqnarray*}
  %\label{eq:3bu}\\
  \nonumber
  a_{ij}(x)U_{ij}(x,\bar{\mu})&-&\frac{\partial g}{\partial p_1}(u_{\bar{\mu}}, v_{\bar{\mu}}, -\theta_1(x,\bar{\mu}) ,
  u_{\bar{\mu},2})U_{1}
  +\frac{\partial g}{\partial p_2}(u_{\bar{\mu}}, v_{\bar{\mu}},
  -u_{1}, \theta_{2}(x,\bar{\mu}))U_{2}\\
\nonumber  &+&\frac{\partial g}{\partial u}(\xi(x,\bar{\mu}), v_{\bar{\mu}}, -u_1, u_2 )U\geq 0.
\end{eqnarray*}
Hopf's lemma implies $\bar{x}\in \partial \Sigma(\bar{\mu})$ and
\begin{equation}
\label{eq:eee}\frac{\partial U}{\partial \nu}<0,
\end{equation}
where $\nu$ is the outward normal on $\partial \Sigma(\bar{\mu})$.
(\ref{eq:eee}) is impossible since we already have (\ref{eq:ddd}).
\textbf{Step 2. is completed.}

\textbf{Proof of Step 3.}
Form Step 2., we have obtained that $U(x, \lambda)\leq 0$ and $V(x, \lambda) \leq 0$ in $\Sigma_\lambda$ for all $\lambda \in (-a, 0)$. Similar to the computation in Step 2., we can obtain: $\forall \lambda \in (-a, 0)$
\begin{eqnarray}
  \nonumber
  a_{ij}(x)U_{ij}(x,\lambda)&-&\frac{\partial g}{\partial p_1}(u_\lambda, v_\lambda, -\theta_1(x,\lambda) ,
  u_{\lambda,2})U_{1}
  +\frac{\partial g}{\partial p_2}(u_\lambda, v_{\lambda},
  -u_{1}, \theta_{2}(x,\lambda))U_{2}\\
\nonumber  &+&\frac{\partial g}{\partial u}(\xi(x,\lambda), v_{\lambda}, -u_1, u_2 )U\geq 0,
\end{eqnarray}
where $
\xi(x, \lambda)\in (\overline{u(x),
u_{\lambda}(x)}$ and
$\theta_{i}(x, \lambda)\in (\overline {u_{i}(x),
u_{\lambda,i}(x)})
$.
Form using the fact that $U_\lambda\leq 0$ on $\partial \Sigma(\lambda)$ and $U_\lambda=0$ on $T_\lambda$ and by using the Hopf's lemma, we have $\frac{\partial U}{\partial x_1}>0$ on $T_\lambda$, i.e. $-2u_1(x_1, x_2)>0$ for $x_1=\lambda$.
Therefore, $u_1(x_1, x_2)<0$ for all $x_1<0$.
Similarly, for $V_\lambda$:
\begin{eqnarray*}
  \nonumber
  b_{ij}(x)V_{ij}(x,\lambda)&-&\frac{\partial f}{\partial p_1}(u_\lambda, v_\lambda, -\tau_1(x,\lambda) ,
  v_{\lambda,2})V_{1}
  +\frac{\partial f}{\partial p_2}(u_\lambda, v_{\lambda},
  -v_{1}, \tau_{2}(x,\lambda))V_{2}\\
\nonumber  &+&\frac{\partial f}{\partial v}(u, \eta, -v_1, v_2 )V\geq 0,
\end{eqnarray*}
where $\tau_i(x, \lambda)\in (\overline{v_{i}(x), v_{\lambda,i}(x)})$, $\eta(x, \lambda) \in  (\overline {v(x), v_{\lambda}(x)})$.
By using the above inequality, we can also get $v_1(x_1, x_2)<0$ for all $x_1<0$.
\textbf{(Step 3. is completed.)}

\section{Proof of Corollary \ref{thm:col}}\label{sect4}
Corollary \ref{thm:col} is a simple consequence of Theorem \ref{thm:main}. Applying Theorem \ref{thm:main} directly to the solution $(u, v)$, we obtain
\begin{equation}\label{eq:fff}
u(x_1, x_2)\geq u(-x_1, x_2),\  v(x_1, x_2)\geq v(-x_1, x_2), \ x_1<0, \ (x_1, x_2)\in \Omega.
\end{equation}
We consider functions:
$$
\tilde{u}(x_1, x_2)=u(-x_1, x_2)\quad \tilde{v}(x_1, x_2)=v(-x_1, x_2), \quad (x_1, x_2)\in \Omega.
$$
Since functions $f$ and $g$ are symmetric in $p_1$ and $\Omega$ is symmetric in $x_1$, $(\tilde{u}, \tilde{v})$ is also a solution of (\ref{eq:sys}). Thus we can apply Theorem \ref{thm:main} to $(\tilde{u}, \tilde{v})$ which tells us that
$$
\tilde{u}(x_1, x_2)\leq \tilde{u}(-x_1, x_2),\quad \tilde{v}(x_1, x_2)\leq \tilde{v}(-x_1, x_2),\quad  x_1<0, (x_1, x_2)\in \Omega,
$$
or equivalently
\begin{equation}\label{eq:ggg}
u(-x_1, x_2)\leq u(x_1, x_2),\  v(-x_1, x_2)\leq v(x_1, x_2),\  x_1<0,\ (x_1, x_2)\in \Omega.
\end{equation}
(\ref{eq:fff}) and (\ref{eq:ggg}) together give us
$$
u(-x_1, x_2)=u(x_1, x_2),\quad v(-x_1, x_2)=v(x_1, x_2),\quad  \forall (x_1, x_2)\in \Omega.
$$
Therefore $u$ and $v$ are both symmetric in $x_1$ and Corollary \ref{thm:col} is proved.

\end{document}